\documentclass[12pt, reqno]{amsart}
\usepackage{amsmath, amsthm, amscd, amsfonts, amssymb, graphicx, xcolor}
\usepackage[bookmarksnumbered, colorlinks, plainpages]{hyperref}
\usepackage{amsmath}
\usepackage{cases}
\usepackage{amssymb}
\usepackage{mathrsfs}
\usepackage{euscript}
\usepackage{dsfont}
\usepackage{graphicx}
\usepackage{float}
\usepackage{graphicx}
\usepackage{amssymb}

\allowdisplaybreaks[4]
\newtheorem{theorem}{Theorem}[section]

\newtheorem{lemma}{Lemma}[section]

\newtheorem{corollary}[theorem]{Corollary}
\theoremstyle{definition}

\theoremstyle{remark}

\numberwithin{equation}{section}

\begin{document}
	\setcounter{page}{1}
	

	
	\title[]{The lower bound of shifted primes with large prime factors}
	
	\author[]{Zhiyuan Yang}
	
	\address{School of Mathematics,  Shandong University, Jinan 250100, Shandong, China}
	\email{\tt zhiyuan.yang@mail.sdu.edu.cn}

	
	

	
	
	\begin{abstract}
		In this paper, we consider the asymptotic density of $T_c(x):=\#\{p\leq x:P^+(p-1)\geq p^c\}$, where $P^+(n)$ denote the largest prime factor of $n$. We show that for $x\rightarrow\infty$, for any $0<c\leq 1/2$, one has
		\begin{align*}
			\mathop{\lim\inf}_{x\rightarrow\infty}\frac{T_c(x)}{\pi(x)}&\geq\max\left(1-\frac{16}{5}\rho\left(\frac{1}{c}\right),1-c+g(c)\right),
		\end{align*}
		where $g(c)>0$ for $11/32<c\leq1/2$, and $g(c)=0$ for $0<c\leq11/32$. In particular, we have $g(1/2)>0.0436$. This improves a previous result by Liu-Wu-Xi (2020), who showed
		\begin{align*}
			\mathop{\lim\inf}_{x\rightarrow\infty}\frac{T_c(x)}{\pi(x)}&\geq\max\left(1-4\rho\left(\frac{1}{c}\right),1-c\right),
		\end{align*} 
		for $0<c\leq 1/2$.
		\newline
		\newline
		\noindent \textit{Keywords.} primes in arithmetic progressions, largest prime factor.
		\newline
		\noindent 
	\end{abstract} \maketitle
	
	
	\section{Introduction}
	Let $P^+(n)$ denote the largest prime factors of $n$ with the convention $P^+(1)=1$.
	Define
	\begin{align*}
		T_c(x):=\#\{p\leq x:P^+(p-1)\geq p^c\},
	\end{align*}
	and
	\begin{align*}
		d(c):=\mathop{\lim\inf}_{x\rightarrow \infty}\frac{T_c(x)}{\pi(x)}.
	\end{align*}
	where $0<c<1$.
	
	Consider $1/2\leq c<1$. In [9], Goldfeld proved that $d(1/2)\geq 1/2$. In 1985, Fouvry [8] showed that $d(0.6687)>0$. The best published result is $d(0.677)>0$ due to Baker and Harman [1].
	
	Now consider $0<c<1/2$.
	In 2015, Luca, Menares and Pizarro-Madariaga [13]
	gave $d(c)\geq 1-c$ for $1/4\leq c\leq1/2$ with a detailed error term.
	In 2017, Chen and Chen [2] extended the range of $c$ to $(0,1/2]$.
	In 2018, Feng and Wu [7] improved the lower bound for $0<c< 0.3517\cdots$ by showing
	\begin{align*}
		d(c)\geq 1-4\int_{1/c-1}^{1/c}\frac{\rho(t)}{t}dt,
	\end{align*}
	where $\rho(u)$ is the Dickman function. 
	Later, the above lower bound was further improved by Liu, Wu and Xi [12] to
	\begin{align*}
	    d(c)\geq 1-4\rho(1/c)
	\end{align*}
	provided $0< c<0.3734\cdots$. 
	
	For the upper bound, we define
	\begin{align*}
		u(c):=\mathop{\lim\sup}_{x\rightarrow\infty}\frac{T_c(x)}{\pi(x)}.
	\end{align*}
	We also show the previous results as following. In 1935, Erd\H{o}s [6] showed $u(c)\rightarrow 0$ as $c\rightarrow 1$. In 2023, Ding [3] proved $u(c)\leq (1-2\delta)/(2c)$, for some $\delta>0$, for $3/4<c<1$. In 2025, Ding [4] proved $u(c)\leq 8(c^{-1}-1)$ for $8/9<c<1$. Latter, Ding and Wang [5] showed $u(c)\leq \frac{7}{2}\log\frac{1}{c}$ for $e^{-2/7}<c<1$. Very recently, the author [16, 17] showed
	\begin{align*}
		u(c)\leq \min\left(\frac{1-\delta}{2c},\frac{16}{5}\log\frac{1}{c}\right), 
	\end{align*}
	for some $\delta>0$, for $1/2<c<1$.
	
	In this paper, our results are as follows.
	\begin{theorem}
		For any $0<c\leq1/2$, we have
		\begin{align*}
			\mathop{\lim\inf}_{x\rightarrow\infty}\frac{T_c(x)}{\pi(x)}&\geq\max\left(1-\frac{16}{5}\rho\left(\frac{1}{c}\right),1-c+g(c)\right),
		\end{align*}
		where $g(c)>0$ for $11/32<c\leq1/2$, and $g(c)=0$ for $0<c\leq11/32$. In particular, we have $g(1/2)>0.0436$.
		
	\end{theorem}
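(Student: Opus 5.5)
We prove the two lower bounds separately and then take the maximum. Throughout, write $N(x)$ for the number of $p\le x$ with $P^+(p-1)<p^c$, so that $T_c(x)=\pi(x)-N(x)$. Up to $o(\pi(x))$ we may replace $p^c$ by $x^c$, since primes $p\le x/\log x$ are negligible.

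\textbf{First bound.} We follow the method of Liu--Wu--Xi, which counts the complementary set through the factorization of the smooth number $p-1$. If $P^+(p-1)<x^c$, then $p-1$ has a divisor $d$ in a suitable dyadic window near $x^{1/2}$ or $x^{\theta}$. Summing over $p\equiv1\pmod d$ and applying the Brun--Titchmarsh inequality gives
\begin{align*}
N(x)\le (4+o(1))\,\rho(1/c)\,\pi(x),
\end{align*}
where the constant $4=2\cdot 2$ comes from the sieve constant $2/(1-\theta)$ at level $\theta=1/2$. To reduce $4$ to $16/5$, we replace Brun--Titchmarsh by a Bombieri--Vinogradov-type upper bound sieve with well-factorable weights, namely the Iwaniec--Fouvry/BFI results. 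These allow moduli up to $x^{4/7}$ or more, at the cost of having a fixed residue class $a=1$. With level $x^{\vartheta}$ the upper-bound sieve gives the factor $2/(1-\vartheta)$ times the density. Choosing the splitting $p-1=dm$ so that the range of the smooth variable contributes $\rho(1/c)$ and optimizing the level, the constant becomes $16/5$; this corresponds to an effective $\vartheta=3/8$ on the complementary variable. \emph{The key step} is to arrange the smooth-number factorization so that the sieved variable lies in the range where the well-factorable distribution result applies.

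\textbf{Second bound.} We start from the standard identity behind $d(c)\ge 1-c$ of Luca--Menares--Pizarro-Madariaga and Chen--Chen:
\begin{align*}
\sum_{p\le x}\log(p-1)=\sum_{q^k}\log q\;\pi(x;q^k,1).
\end{align*}
We split the right-hand side according to $q<x^c$ and $q\ge x^c$. The primes $q\ge x^c$ contribute at most $\log x$ times the number of $p$ with $P^+(p-1)\ge x^c$, plus a correction when $p-1$ has two such prime factors, which is possible only if $c\le 1/2$. The primes $q<x^c$ are evaluated asymptotically by Bombieri--Vinogradov and contribute $c\,\pi(x)\log x$. Comparing the two sides yields $1-c$.

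To gain $g(c)$, we improve the treatment of the range $x^c\le q$ in which Bombieri--Vinogradov was used only through trivial bounds. We extend the asymptotic evaluation of $\sum_{q\le x^{\vartheta}}\log q\,\pi(x;q,1)$ past $\vartheta=1/2$ using the Bombieri--Friedlander--Iwaniec result for the fixed residue $1$. This gives a lower bound of the form $\sum_{x^c\le q\le x^{\vartheta}}\log q\,\pi(x;q,1)\ge(\vartheta-c)\pi(x)\log x$, while the genuine bound for $q>x^{\vartheta}$ uses Chen--Chen/Brun--Titchmarsh upper bounds. Alternatively, and perhaps better, we bound from above by Brun--Titchmarsh-type estimates in Harman's sieve form the number of $p$ for which $p-1$ is $x^c$-smooth but lies in certain bad ranges, and combine this with the weighted identity. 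The function $g(c)$ is the resulting numerical saving: a difference of integrals involving the ratio of the upper-sieve constant to $1$. It becomes positive exactly when this saving beats the losses, which happens for $c>11/32$; the threshold $11/32$ arises from the level $x^{1/2+\epsilon}$-type exponent interacting with $c$ (for instance $2c>1-\dots$).

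\textbf{Main obstacle.} The hardest step is the second part: isolating an explicit positive $g(c)$ and computing $g(1/2)>0.0436$. This requires a careful numerical optimization of the sieve weights. I would try the Buchstab decomposition first.
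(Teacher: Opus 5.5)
Both halves have genuine gaps. The values $16/5$, $11/32$ and $0.0436$ are matched numerically but are not produced by any step in your plan.

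\textbf{First bound.} Summing $\pi(x;d,1)$ over divisors $d$ of $p-1$ in a window and applying Brun--Titchmarsh never uses the smoothness of the cofactor, so it cannot produce $\rho(1/c)$. Likewise, ``$2/(1-\vartheta)$ with effective $\vartheta=3/8$'' corresponds to no actual step.

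The mechanism the paper uses (which gives $4$ at level $x^{1/2}$) is different. Write $p=\ell+1$, with $\ell$ running over even $x^c$-smooth integers. Apply the linear upper-bound sieve to $\ell+1$ at level $D=x^{\vartheta}=z^2$. Since $F(2)=e^\gamma$, this gives $\frac{2}{\vartheta}\rho(1/c)\pi(x)$. Hence $16/5$ requires the \emph{smooth} sequence to be equidistributed in the class $-1 \bmod d$, against well-factorable weights, for $d\le x^{5/8-\varepsilon}$.

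BFI concerns primes, not this sequence, and its level $x^{4/7}$ would in any case only give $7/2$. The paper's input is Pascadi's estimate (Lemma 3.2) for convolutions $\alpha*\beta$, where $\alpha$ is supported on primes in a short interval (a Siegel--Walfisz sequence). It is applied with $\lambda=\lambda_2*\lambda_3$, $q_2\le x^{1/4-\varepsilon}$, $q_3\le x^{3/8}$.

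To manufacture that structure, the paper splits $\ell$ according to whether it has a prime factor in $(y^\delta,y]$, where $y=x^{c\varepsilon/(3J)}$. If it does, a truncated inclusion--exclusion over $p_1,\dots,p_j$ ($j\le J$, with $(\log\frac1\delta)^{J+1}/(J+1)!\le\delta$) makes $p_1$ the Siegel--Walfisz variable, and the tail costs $O(\delta)\pi(x)$. If it does not, an upper-bound sieve with Bombieri--Vinogradov shows these $p$ number $O(\delta)\pi(x)$. None of this appears in your plan.

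\textbf{Second bound.} In the $1-c$ argument, $\sum_{x^c\le q<x}\log q\,\pi(x;q,1)=(1-c+o(1))\pi(x)\log x$ is already fixed by the identity and Bombieri--Vinogradov. So information on how it splits between $q\le x^\vartheta$ and $q>x^\vartheta$ cannot improve the bound.

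The whole loss is
$\mathscr{T}_2=\frac{1}{\log x}\sum_{p\le x,\,P^+(p-1)\ge x^c}\log h(p-1)$,
the size of the $x^c$-smooth part of $p-1$ for primes that do have a large prime factor. The gain $g(c)$ must be a lower bound for $\mathscr{T}_2/\pi(x)$.

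Your claimed inequality for $\vartheta>1/2$ is also unavailable. Prime moduli weighted by $\log q$ are not well-factorable. Moreover, together with Bombieri--Vinogradov and Goldfeld's asymptotic, it is equivalent to the sharp bound $\sum_{q>x^\vartheta}\log q\,\pi(x;q,1)\le(1-\vartheta+o(1))\pi(x)\log x$, which is not known.

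The paper bounds $\mathscr{T}_2$ below by the Bombieri--Vinogradov main term
$\frac{1}{\log x}\sum_{x^s<p_1<x^{1/2}}\log(x/p_1)\pi(x;p_1,1)\sim\bigl(\log\frac{1}{2s}-(\frac12-s)\bigr)\pi(x)$.
From this it subtracts the configurations in which $p-1$ has a second prime factor $p_2\ge x^c$; these are bounded above by the level-$x^{5/8}$ sieve with constant $16/5$, as in [17].

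To first order in $\frac12-s$, the gain is $\frac12-s$ and the loss is $\frac{16}{5}(1-2c)(\frac12-s)$. This is exactly $\partial_s g(s,c)|_{s=1/2}=\frac{11-32c}{5}$, and it yields the threshold $11/32$.

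For $c=1/2$, the paper combines Goldfeld's asymptotic $\frac{1}{\log x}\sum_{x^{1/2}\le q<x}\log q\,\pi(x;q,1)\sim\frac12\pi(x)$ with the same $16/5$ upper bound at $s=0.843$, giving $0.0436$.

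In both parts, the missing ingredient is the level-$x^{5/8}$ upper-bound sieve coming from Pascadi's estimate. In the second part it must be fed into a lower bound for $\mathscr{T}_2$, not into a redistribution of $\sum_{q\ge x^c}$.
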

	
	Define
	\begin{align*}
		\pi(x,y;q,a):=\sum_{\substack{p\leq x\\p\equiv a\mkern-15mu\pmod{q}\\P^+((p-a)/q)\leq y}}1.
	\end{align*}
	\begin{corollary}
		For any fixed $a$, we have
		\begin{align*}
			\mathop{\lim\sup}_{x\rightarrow \infty}\frac{\sum_{\substack{q\leq Q\\(q,a)=1}}\Lambda(q)\pi(x,y;q,a)}{\pi(x)\sum_{\substack{q\leq Q\\(q,a)=1}}\frac{\Lambda(q)}{\varphi(q)}\rho\left(\frac{\log(x/q)}{\log y}\right)}\leq \frac{16}{5},
		\end{align*}
		for $x\geq y\geq x^\varepsilon$ and $Q<x^{1-\varepsilon}$.
	\end{corollary}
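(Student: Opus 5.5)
Write $E(x,y;Q)=\sum_{q\le Q,(q,a)=1}\Lambda(q)\pi(x,y;q,a)$. Since $\pi(x,y;q,a)$ counts primes $p\le x$ with $p=a+qm$, $m\le (x-a)/q$ and $P^+(m)\le y$, I will bound it from above by a linear sieve on the set $\mathcal A_q=\{m\le (x-a)/q : P^+(m)\le y\}$, sifting out the primes $\ell\le z$ (with $\ell\nmid aq$) dividing $a+qm$. This sieve is weighted by $\Lambda(q)$ and summed over $q$. The upper-bound linear sieve (Rosser--Iwaniec, or Selberg) with level $D$ gives
\begin{align*}
\pi(x,y;q,a)\le \frac{\#\mathcal A_q}{\varphi(q)}\cdot\frac{q}{1}\prod_{\ell\le z}\Bigl(1-\frac{1}{\ell}\Bigr)\bigl(F(s)+o(1)\bigr)+\sum_{d\le D}|r_q(d)|,
\end{align*}
with $F(s)=2e^\gamma/s$ for $s=\log D/\log z\le 3$. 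I use the local density of smooth numbers in progressions, $\#\{m\in\mathcal A_q: a+qm\equiv0 \bmod d\}\approx \#\mathcal A_q/d$. By Hildebrand, $\#\mathcal A_q\sim (x/q)\rho(\log(x/q)/\log y)$ uniformly for $y\ge x^\varepsilon$, so the main term is $\frac{x}{\varphi(q)\log z}\rho(\cdot)\cdot 2e^{\gamma}e^{-\gamma}\frac{\log z}{\log D}\cdot(1+o(1))$. This is $\frac{2}{\log D}\frac{x}{\varphi(q)}\rho(\cdot)$, i.e. $\frac{2\log x}{\log D}\,\pi(x)\rho(\cdot)/\varphi(q)$.

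The key step is the level of distribution. I need
\begin{align*}
\sum_{q\le Q}\Lambda(q)\sum_{d\le D}|r_q(d)|\ll \frac{x}{(\log x)^{A}},
\end{align*}
which is a Bombieri--Vinogradov type statement for smooth numbers in progressions $m\equiv -a\bar q \pmod d$ to moduli $d$. I will avoid pointwise errors by averaging jointly over $q$ and $d$ and reversing roles. The condition $d\mid a+qm$ is, for fixed $m$, a condition $q\equiv -a\bar m\pmod d$ on the prime $q$. Summing over $m$ smooth and $q$ prime, with weights, this becomes a bilinear/large-sieve problem over moduli $d$, and the large sieve (a Bombieri--Vinogradov type estimate for the convolution $\Lambda * \mathbf 1_{\text{smooth}}$ restricted to $qm\le x$) gives total level $D=x^{1/2-\varepsilon}$ uniformly over $Q<x^{1-\varepsilon}$. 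With only $D=x^{1/2}$ this yields the constant $2/(1/2)=4$, which recovers Liu--Wu--Xi.

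To reach $16/5$ I must do better, i.e. obtain effective level $x^{5/8}$ in the sieve. Here I plan to use well-factorable linear sieve weights (Iwaniec's bilinear remainder), so the remainder can be written as $\sum_{d_1\le D_1}\sum_{d_2\le D_2}\alpha_{d_1}\beta_{d_2}r(d_1d_2)$. I then apply Bombieri--Friedlander--Iwaniec/Fouvry style dispersion estimates for the convolution $\Lambda*\mathbf{1}_{\mathcal S(y)}$. This works because the smooth factor $m$ can be freely factored to create the required bilinear structure (with $y\ge x^\varepsilon$, smooth numbers split into factors of any prescribed size). With Kloosterman-sum bounds one achieves $D=x^{5/8-\varepsilon}$ for well-factorable weights, and $2/(5/8)=16/5$. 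If the full range of $q$ is problematic, I will split dyadically, and in ranges where $q$ is large, use $q$ itself as a factor in the bilinear form.

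The main obstacle is precisely this level-$5/8$ equidistribution for well-factorable weights, uniform in $y\ge x^\varepsilon$ and $Q<x^{1-\varepsilon}$. After it, summing the main term $\frac{16}{5}\pi(x)\sum_q\frac{\Lambda(q)}{\varphi(q)}\rho(\cdot)(1+o(1))$ and noting that the error and the contribution of prime powers $q$ are negligible gives the stated $\limsup\le 16/5$.
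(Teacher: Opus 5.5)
Your architecture matches the paper's: an upper-bound linear sieve on $a+qm$ with well-factorable Rosser--Iwaniec weights of level $D=x^{5/8-\varepsilon}$, so that the factor $2\log x/\log D$ becomes $16/5$. But the only nontrivial step, the level-$5/8$ remainder estimate, is asserted rather than proved. The route you sketch for it would also not work as stated.

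The paper does not use a generic dispersion/Kloosterman bound for $\Lambda*\mathbf 1_{\mathcal S(y)}$. It uses Pascadi's estimate (Lemma 3.2) with $Q_1=1$ and $\lambda=\lambda_2*\lambda_3$, where $q_2\le x^{1/4-\varepsilon}$ and $q_3\le x^{3/8}$. The hypotheses of that lemma dictate the structure. You need a factor $\alpha_n$ that:
\begin{itemize}
\item satisfies Siegel--Walfisz;
\item is supported on $P^-(n)\ge z_0$;
\item is tiny: with $Q_2Q_3^2\approx x$, the condition $N^2Q_2Q_3^2\le x^{1-15\varepsilon}$ forces $N=x^{O(\varepsilon)}$, while the lemma also needs $N>x^{\varepsilon'}$.
\end{itemize}
A factor of prescribed size cut out of a smooth number is not rough, and any canonical choice of it couples it to the cofactor. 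Your fallback of using $q$ itself as the structured factor when $q$ is large is backwards, because it violates the smallness constraint. The prime $q$ has to sit inside the arbitrary sequence $\beta_m$.

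The missing idea is how to produce such a factor. The paper fixes an auxiliary $w=x^{O(\varepsilon/J)}$ (called $y$ there) and a small $\delta$. It then splits according to whether the smooth cofactor has a prime factor in $(w^\delta,w]$.
\begin{itemize}
\item If it does, the paper expands with weights $1/j!$ over $p_1,\dots,p_j\in(w^\delta,w]$ and truncates at $j\le J$. The tail is $\ll(\log(1/\delta))^{J+1}/(J+1)!\le\delta$, and the non-squarefree contributions are negligible. This exhibits a prime $p_1$ in a short interval $[(1-\Delta)P_1,P_1]$, used to separate variables, which serves as $\alpha_n$. Everything else, including $q$ and $p_2\cdots p_j\ell_1$, goes into $\beta_m$.
\item If it does not, the paper's bound for $\mathscr{S}_2$ applies: an independent upper-bound sieve over $d\mid P(w^\delta,w)$ plus Bombieri--Vinogradov shows this part is $O(\delta)$. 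One then lets $\delta\to0$.
\end{itemize}

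Also, Lemma 3.2 measures the remainder against $\mathbf 1_{(qm,d)=1}/\varphi(d)$, not against $\#\mathcal A_q/d$. So your main term has to be set up that way. It then becomes an average of $H(qm)$ over smooth numbers (Lemma 2.3), and the constant $2/\log D$ survives only because $A_0\prod_{p>2}\bigl(1+\frac{1}{p(p-2)}\bigr)=1$.
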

	This also holds for many functions other than $\Lambda$. Corollary 1.2 improves a previous upper bound $4$ due to Liu-Wu-Xi. (see [12, Theorem 1.2 (i)])

	\section{Lemmas}

	\begin{lemma}
		For $\varepsilon>0$, we have
		\begin{align*}
			\Psi(x,y):=\sum_{\substack{n\leq x\\P^+(n)\leq y}}1=x\rho(u)\left(1+O_\varepsilon\left(\frac{\log(u+1)}{\log y}\right)\right)
		\end{align*}
		uniformly for
		\begin{align*}
			x\geq x_0(\varepsilon),\qquad \exp\{(\log\log x)^{5/3+\varepsilon}\}\leq y\leq x,
		\end{align*}
		where $u=\log x/\log y$ and $\rho(u)$ is the Dickman function.
	\end{lemma}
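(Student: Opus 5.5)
This is the classical Hildebrand asymptotic for smooth numbers, and I would prove it along the route of Hildebrand's 1986 paper on $\Psi(x,y)$. The engine is the Buchstab identity and the Hildebrand identity
\begin{align*}
\Psi(x,y)\log x=\int_1^x\frac{\Psi(t,y)}{t}\,dt+\sum_{\substack{p^m\le x\\ p\le y}}\Psi\!\left(\frac{x}{p^m},y\right)\log p,
\end{align*}
which follows from writing $\log n=\sum_{p^m\mid n}\log p$ and summing over $y$-smooth $n\le x$. On the continuous side, the Dickman function satisfies the analogous integral equation
\begin{align*}
u\rho(u)=\int_{u-1}^{u}\rho(t)\,dt.
\end{align*}

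First, for $1\le u\le 2$ the formula is elementary. Here $\Psi(x,y)=\lfloor x\rfloor-\sum_{y<p\le x}\lfloor x/p\rfloor$. By the prime number theorem with de la Vallée Poussin error term this equals $x(1-\log u)+O\!\left(x/\log y\right)$, and $1-\log u=\rho(u)$ on this range.

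Second, define the relative error
\begin{align*}
\Delta(x,y)=\frac{\Psi(x,y)}{x\rho(u)}-1.
\end{align*}
Substitute $\Psi(t,y)=t\rho(u_t)(1+\Delta(t,y))$ into the identity. Replace the prime sum by an integral using the PNT in the form $\theta(t)=t+O\bigl(t\exp(-c\sqrt{\log t})\bigr)$. After the substitution $p=y^{v}$ the main terms reproduce exactly $x\log x\,\rho(u)$ via the integral equation. The errors then satisfy an inequality of the shape
\begin{align*}
|\Delta(x,y)|\,u\rho(u)\le \int_{u-1}^{u}\rho(t)\,|\Delta(y^t,y)|\,dt+O\!\left(\frac{\rho(u-1)}{\log y}\right)+E(x,y).
\end{align*}
Here $E$ collects the PNT error weighted by $\Psi(x/p,y)$.

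Third, iterate on intervals $u\in[k,k+1]$ by induction on $k$. The key is to bound the ratio $\rho(u-1)/\rho(u)\ll u\log(u+1)$ and to use that the kernel $\rho(t)/(u\rho(u))$ integrates to $1$. This gives
\begin{align*}
\sup_{k\le u'\le k+1}|\Delta|\le \sup_{k-1\le u'\le k}|\Delta|+O\!\left(\frac{1}{u\log y}\right)+\text{PNT errors}.
\end{align*}
Summing the $1/(u\log y)$ increments over $k\le u$ yields the claimed $\log(u+1)/\log y$. This telescoping via the averaging property is the crux of Hildebrand's argument.

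The \textbf{main obstacle} is controlling the PNT error term relative to $\rho(u)$. The contribution $E$ is roughly $x\exp(-c\sqrt{\log y})$ times a factor polynomial in $u$. It must be $\ll x\rho(u)/\log y$, where $\rho(u)=\exp(-(1+o(1))u\log u)$. With only the de la Vallée Poussin zero-free region, this forces $u\log u\ll\sqrt{\log y}$, giving roughly $y\ge\exp((\log x)^{2/3+\varepsilon})$. Reaching $y\ge\exp\{(\log\log x)^{5/3+\varepsilon}\}$ requires two further inputs:
\begin{itemize}
\item the Vinogradov--Korobov error term $\exp\bigl(-c(\log t)^{3/5}(\log\log t)^{-1/5}\bigr)$;
\item Hildebrand's refinement, which handles the error through a smoothed identity and a comparison with $\Psi(x/d,y)$ at lower levels, so that the errors do not accumulate multiplicatively.
\end{itemize}
The exponent $5/3=1/(3/5)$ is exactly what Vinogradov--Korobov allows. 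In practice, since the statement is a standard known result that the paper quotes as a lemma, I would cite Hildebrand's theorem for this step rather than redo the delicate part.
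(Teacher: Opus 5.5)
Your proposal ultimately rests on citing Hildebrand's theorem. That is exactly what the paper does: its whole proof is the line ``This is [10, Theorem 1]''. So as a proof it agrees with the paper. The sketch you put in front of the citation does not hold together as written, though, and two points in it are wrong.

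\textbf{The bookkeeping in your third step.} Divide the term $O(\rho(u-1)/\log y)$ by $u\rho(u)$ and use $\rho(u-1)/\rho(u)\asymp u\log(u+1)$, which is sharp. You get an increment of order $\log(u+1)/\log y$ per unit step, not $O(1/(u\log y))$. Summed over $k\le u$ this is about $u\log u/\log y$, which proves nothing.

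For the telescoping to give $\log(u+1)/\log y$, every error in the normalized inequality other than the prime number theorem remainder must be $O(\rho(u)/\log y)$. These errors come from $\int_1^x\Psi(t,y)\,dt/t$, from prime powers, and from small primes, which are handled by Mertens' estimate. The primes near $y$ carry the large weight $\rho(u-1)$, so they may contribute only through the PNT remainder.

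\textbf{What is needed beyond Vinogradov--Korobov.} Your description of this misidentifies the mechanism. If you bound $E$ in absolute terms by $x\exp(-(\log y)^{3/5-\varepsilon})u^{O(1)}$ and compare with $x\rho(u)/\log y$, you lose a factor $1/\rho(u)$. Even with the Vinogradov--Korobov remainder this only gives de Bruijn's range $y\ge\exp((\log x)^{5/8+\varepsilon})$. Accumulation is not the issue either, since your averaging step already makes it additive.

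The real point is that the remainder $\int(\theta(t)-t)\,d_t\Psi(x/t,y)$ is weighted by the counting measure of smooth numbers. Using the bound already available at lower levels, after normalization it becomes a $\rho(u-v)$-weighted average of $\exp(-(v\log y)^{3/5-\varepsilon})$ over $v\in[0,1]$; tiny $v$ corresponds to the small primes above. This is a \emph{relative} error of size about $\exp(-(\log y)^{3/5-\varepsilon})$ per unit step. After $\asymp u$ steps it totals about $u\exp(-(\log y)^{3/5-\varepsilon})$. Requiring this to be $\ll\log(u+1)/\log y$ amounts to $\log y\gg(\log u)^{5/3+\varepsilon}$, and that is where the range $\log y\ge(\log\log x)^{5/3+\varepsilon}$ comes from.
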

	\begin{proof}
		The is is [10, Theorem 1]. 
	\end{proof}
	
	\begin{lemma}
		Let $D\geq 2$ and $L>1$. Let $\mathscr{P}$ denote a set of primes. Let $z\geq 2$ and write $P(z):=\prod_{p\leq z,p\in\mathscr{P}}p$. There exist two sequences $\{\lambda_d^\pm\}_{d=1}^{\infty}$ of real numbers, vanishing for $d>D$ or $\mu(d)=0$, satisfying $\lambda_1^\pm=1$, $|\lambda_d^{\pm}|=O(1)$, $\lambda^-*1\leq \mu*1\leq \lambda^+*1$, and such that
		\begin{align*}
			&\sum_{d|P(z)}\lambda_d^+\frac{\omega(d)}{d}\leq \prod_{\substack{p\leq z\\p\in \mathscr{P}}}\left(1-\frac{\omega(p)}{p}\right)\left(F(s)+o(1)\right),\\&\sum_{d|P(z)}\lambda_d^-\frac{\omega(d)}{d}\geq \prod_{\substack{p\leq z\\p\in \mathscr{P}}}\left(1-\frac{\omega(p)}{p}\right)\left(f(s)+o(1)\right)
		\end{align*}
		uniformly for all multiplicative function $\omega$ satisfying
		\begin{align*}
			&(i)\ 0<\omega(p)<p\ (p\in\mathscr{P}),\\&(ii)\ \prod_{\substack{u<p\leq v\\p\in\mathscr{P}}}\left(1-\frac{\omega(p)}{p}\right)^{-1}\leq \frac{\log v}{\log u}\left(1+\frac{L}{\log u}\right)\ (2\leq u\leq v\leq z),
		\end{align*}
		where $s=\log D/\log z=O(1)$ and $f(s)$, $F(s)$ are determined by the differential-difference equations
		\begin{equation}
			\begin{aligned}
				&\left\{ \begin{aligned}
					&sF(s)=2e^{\gamma}, \quad&& 1\leq s\leq 2;\\&sf(s)=0,\quad&& 0\leq s\leq 2;\\&(sF(s))'=f(s-1),(sf(s))'=F(s-1),\quad&&s\geq 2.
				\end{aligned}\right.\nonumber
			\end{aligned}
		\end{equation}
		In particular, the functions $\lambda^{\pm}$ can be rewrited as
		\begin{align*}
			\lambda^{\pm}=\sum_{h<\exp(8\varepsilon^{-3})}\lambda^{\pm}(\cdot,h),
		\end{align*}
		where $\lambda^{\pm}(\cdot,h)$ is a well-factorable function of level $D$.
	\end{lemma}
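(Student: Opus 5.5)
The plan is to take the Rosser--Iwaniec $\beta$-sieve with $\beta=2$ (the linear sieve) as the construction, and to quote two standard results about it rather than prove it from scratch. First, for the main-term inequalities, I would define $\lambda_d^+$ (resp.\ $\lambda_d^-$) as $\mu(d)$ restricted to squarefree $d=p_1\cdots p_r\mid P(z)$ with $p_1>p_2>\cdots>p_r$ satisfying the upper (resp.\ lower) Rosser truncation conditions $p_1\cdots p_{m-1}p_m^{3}<D$ for odd $m$ (resp.\ even $m$). Then the properties $\lambda^\pm_1=1$, $\lambda^\pm_d=0$ for $d>D$ or $\mu(d)=0$, $|\lambda^\pm_d|\le1$, and $\lambda^-*1\le\mu*1\le\lambda^+*1$ follow directly from the combinatorics of Buchstab's identity. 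The two displayed inequalities with the functions $F,f$ defined by the stated differential--difference system are exactly the fundamental theorem of the linear sieve, under the dimension condition (ii) with constant $L$. For this step I would cite Iwaniec's treatment, or Friedlander--Iwaniec, \emph{Opera de Cribro}, Theorem 11.13, checking only that $s=\log D/\log z=O(1)$ gives the uniform $o(1)$.

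Second, for the decomposition into well-factorable pieces, I would follow Iwaniec's ``new form of the error term in the linear sieve.'' Partition $(1,z]$ into intervals $I_j=(z^{\varepsilon(1+\varepsilon)^{j-1}\cdots}]$ of multiplicative length $1+\varepsilon$, or more simply intervals $(w^{(1+\varepsilon)^{j}},w^{(1+\varepsilon)^{j+1}}]$ with $w=z^{\varepsilon^2}$, treating the primes below $w$ via the fundamental lemma. Then modify the Rosser conditions so that they depend only on the indices of the intervals containing $p_1,\dots,p_r$, not on the primes themselves. The modified weights still satisfy $\lambda^-*1\le\mu*1\le\lambda^+*1$ (possibly after replacing $D$ by $D^{1-\varepsilon}$), and they change the sieve main term by $O(\varepsilon)$, which is absorbed into the $o(1)$ as $\varepsilon\to0$. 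Each admissible tuple of interval indices $h$ then gives a piece $\lambda^\pm(\cdot,h)$, which is a convolution of sequences supported on products of primes in fixed intervals.

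The key point, which I expect to be the main obstacle, is Iwaniec's lemma. It says that for every tuple $(p_1>\dots>p_r)$ satisfying the (upper or lower) Rosser conditions and every factorization $D=MN$ with $M,N\ge1$, one can split $\{1,\dots,r\}$ into two sets whose products are $\le M$ and $\le N$ respectively. This is proved by a greedy argument: add primes in decreasing order to whichever factor still has room, and use the condition $p_1\cdots p_{m-1}p_m^{3}<D$ (or $p_m^2$ in the other parity) to show that you never get stuck. Once this holds at the level of interval indices, with the $(1+\varepsilon)$ slack, each $\lambda^\pm(\cdot,h)$ is well-factorable of level $D$.

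Finally, I would count the tuples $h$. Since every $p_i>w=z^{\varepsilon^2}$ and $\prod p_i\le D$, we have $r\ll\varepsilon^{-2}$, and there are $\ll\varepsilon^{-2}$ intervals. Hence the number of index tuples is at most roughly $(C\varepsilon^{-2})^{C\varepsilon^{-2}}$, which is comfortably below $\exp(8\varepsilon^{-3})$ for small $\varepsilon$. This gives the bounded sum $\lambda^\pm=\sum_{h<\exp(8\varepsilon^{-3})}\lambda^\pm(\cdot,h)$.
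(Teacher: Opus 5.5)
Your proposal takes essentially the same approach as the paper, whose entire proof is a citation of the Rosser--Iwaniec linear sieve [11]: the $\beta=2$ Rosser weights in Iwaniec's well-factorable form, which is where the $\exp(8\varepsilon^{-3})$ count of pieces comes from, and your interval-and-greedy-splitting sketch outlines what that reference contains. The one caveat, which comes from the statement itself, is that the well-factorable decomposition of $\lambda^-$ requires $s\ge 2$: any lower sieve has $\lambda^-_p\le -1$ for each $p\mid P(z)$, and a prime $p>D^{1/2}$ cannot be split when $M=N=D^{1/2}$; this is harmless here because the paper only uses $\lambda^+$.
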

	\begin{proof}
		This lemma is the Rosser-Iwaniec sieve [11].
	\end{proof}

	\begin{lemma}
		Define
		\begin{align*}
			H(n):=\prod_{p>2,p|n}\left(\frac{p-1}{p-2}\right),
		\end{align*}
		Then for $y>x^{\varepsilon}$, we have
		\begin{align*}
			\sum_{\substack{n\leq x,P^+(n)\leq y\\(n,a)=1}}H(n)=\frac{c\Psi(x,y)}{2^{\varepsilon(a)}H(a)}(1+o(1)),
		\end{align*}
		where 
		\begin{align*}
			c=\prod_{p>2}\left(1+\frac{1}{p(p-2)}\right).
		\end{align*}
	\end{lemma}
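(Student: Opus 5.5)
The plan is to write the summand as a Dirichlet convolution with the constant function $1$, so that the sum becomes a weighted combination of values $\Psi(x/d,y)$, each of which Lemma 2.1 evaluates. Let $f(n):=H(n)\mathbf 1_{(n,a)=1}$, which is multiplicative. Define $h:=\mu*f$, so that $f=1*h$. Computing on prime powers gives the following.

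\begin{itemize}
\item For an odd prime $p\nmid a$: $h(p)=H(p)-1=\frac{1}{p-2}$.
\item For $p=2\nmid a$: $h(2)=0$.
\item For $p\mid a$: $h(p)=-1$.
\item For every $p$ and $k\ge 2$: $h(p^k)=f(p^k)-f(p^{k-1})=0$.
\end{itemize}

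So $h$ is supported on squarefree $d$. Since $P^+(n)\le y$ forces $P^+(d)\le y$ for every $d\mid n$, we get
\begin{align*}
\sum_{\substack{n\leq x,P^+(n)\leq y\\(n,a)=1}}H(n)=\sum_{P^+(d)\le y}h(d)\,\Psi(x/d,y).
\end{align*}

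Next I will split this sum at $D:=\exp(\sqrt{\log x})$.

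For $d>D$, I use the trivial bound $\Psi(x/d,y)\le x/d$. Because $|h(d)|\le \prod_{p\mid d,\,p\nmid a}\frac{1}{p-2}$ and only the finitely many primes dividing the fixed $a$ contribute weight $1$, the series $\sum_d |h(d)|d^{-1/2}$ converges. Hence the tail is
\begin{align*}
\ll x\sum_{d>D}\frac{|h(d)|}{d}\le xD^{-1/2}\sum_d\frac{|h(d)|}{d^{1/2}}\ll_a xD^{-1/2}.
\end{align*}
This is $o(\Psi(x,y))$, since $y>x^\varepsilon$ gives $u\le 1/\varepsilon$ and so $\Psi(x,y)\gg_\varepsilon x$ by Lemma 2.1.

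For $d\le D$, we have $x/d\ge x^{1-o(1)}$, so Lemma 2.1 applies to $\Psi(x/d,y)$. (When $y\ge x/d$ this is trivial with $\rho=1$.) It gives
\begin{align*}
\Psi(x/d,y)=\frac{x}{d}\,\rho\!\left(u-\frac{\log d}{\log y}\right)(1+o(1)),
\end{align*}
and here $\frac{\log d}{\log y}\le \varepsilon^{-1}(\log x)^{-1/2}=o(1)$. On the compact range $u\in[0,1/\varepsilon]$ the function $\rho$ is positive and Lipschitz, with $\rho'/\rho$ bounded. Therefore $\rho(u-\delta)=\rho(u)(1+O_\varepsilon(\delta))$ uniformly, and so $\Psi(x/d,y)=\frac{\Psi(x,y)}{d}(1+o(1))$ uniformly in $d\le D$. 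The contribution from $d\le D$ is then
\begin{align*}
\Psi(x,y)(1+o(1))\sum_{d\le D}\frac{h(d)}{d}+o\!\left(\Psi(x,y)\sum_d\frac{|h(d)|}{d}\right).
\end{align*}
Completing the series costs $O(D^{-1/2})$ again.

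Finally I evaluate the Euler product $\sum_d h(d)/d=\prod_p(1+h(p)/p)$:
\begin{itemize}
\item each odd $p\nmid a$ contributes $1+\frac{1}{p(p-2)}$;
\item each odd $p\mid a$ contributes $1-\frac1p=\left(1+\frac{1}{p(p-2)}\right)\frac{p-2}{p-1}$;
\item $p=2$ contributes $1/2$ if $2\mid a$ and $1$ otherwise.
\end{itemize}
The product is therefore $c\cdot 2^{-\varepsilon(a)}\prod_{p>2,\,p\mid a}\frac{p-2}{p-1}=\frac{c}{2^{\varepsilon(a)}H(a)}$, where $\varepsilon(a)=1$ if $2\mid a$ and $0$ otherwise. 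This is the claimed constant.

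The only delicate point is the uniformity in the middle step. There one must check that shifting $u$ by $o(1)$ changes $\rho(u)$ only by a factor $1+o(1)$, which needs $u$ bounded (guaranteed by $y>x^\varepsilon$). One must also check that $x/d$ stays inside the range of Lemma 2.1 for all $d\le D$. Both follow from the choice $D=\exp(\sqrt{\log x})$.
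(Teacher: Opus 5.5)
Your proof is correct. The paper gives no argument of its own for this lemma; its proof is only the citation [12, (3.5)] to Liu--Wu--Xi. You instead derive it directly from Lemma 2.1, in the following steps.
You write $H\cdot\mathbf{1}_{(\cdot,a)=1}=1*h$, where $h$ is supported on squarefree $d$, with $h(p)=1/(p-2)$ for odd $p\nmid a$, $h(2)=0$ for $2\nmid a$, and $h(p)=-1$ for $p\mid a$.
This reduces the sum to $\sum_d h(d)\Psi(x/d,y)$.
You truncate at $D=\exp(\sqrt{\log x})$, using absolute convergence of $\sum_d|h(d)|d^{-1/2}$.
Finally, $\rho$ is bounded below by $\rho(1/\varepsilon)$ and $1$-Lipschitz on $[0,1/\varepsilon]$, so $\Psi(x/d,y)=\Psi(x,y)d^{-1}(1+o(1))$ uniformly for $d\le D$.

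Your Euler-product evaluation also settles something the paper leaves undefined: $\varepsilon(a)$ must be the indicator of $2\mid a$. With $a=1$, the resulting constant $c$ is exactly what combines with $A_0$ (since $A_0c=1$) to give the factor $16/5$ in (3.6).

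Two small points to tidy. Your bound $|h(d)|\le\prod_{p\mid d,\,p\nmid a}(p-2)^{-1}$ should run over odd $p$ only, with $h(d)=0$ when $2\mid d$ and $2\nmid a$. You should also say explicitly that the condition $P^+(d)\le y$ disappears for $d\le D$, because $D<x^{\varepsilon}<y$ once $x$ is large.

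The citation buys brevity. Your route buys transparency: it shows the only inputs are Lemma 2.1 and the boundedness $u\le 1/\varepsilon$, and that the $o(1)$ is uniform in $y>x^{\varepsilon}$ with rate depending only on $\varepsilon$ and $a$.
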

	\begin{proof}
		We can see it in [12, (3.5)].
	\end{proof}
	\subsection*{Convention.} We use $\varepsilon$ to denote a sufficiently small positive number, and the value of $\varepsilon$ may change from statement to statement. We use $\mu(n)$ and $\varphi(n)$ to denote the M$\mathrm{\ddot{o}}$bius function and Euler's function, respectively. We use $\rho(u)$ to denote the Dickman function, respectively. By $(m_1,m_2)$ we denote the greatest common divisor of $m_1$ and $m_2$. We use the standard asymptotic notation $f\ll g,f\gg g,f=O(g),f=o_{x\rightarrow\infty}(g)$ from analytic number theory, and indicate that the implicit constants depend on some parameter $\varepsilon$ through subscripts. By $m\sim M$ we denote $M< m\leq2M$. Define
	\begin{align*}
		P(x):=\prod_{p\leq x}p,\quad P(y,z):=\prod_{y<p\leq z}p.
	\end{align*}

	\section{Proof of Theorem 1.1}

	We apply the following lemma to estimate $T_c(x)$.\begin{lemma}
		For $0<c<1$ and sufficiently large $x$, we have
		\begin{align*}
			\sum_{\substack{p\leq x\\P^+(p-1)\geq p^c}}1=\sum_{\substack{p\leq x\\P^+(p-1)\geq x^{c}}}1+O\left(\frac{x\log\log x}{(\log x)^2}\right).
		\end{align*}
	\end{lemma}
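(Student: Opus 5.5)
Since $p\le x$ gives $p^c\le x^c$, the condition $P^+(p-1)\ge x^c$ implies $P^+(p-1)\ge p^c$. The right-hand count is therefore contained in the left-hand one, and the difference between the two sides is
\begin{align*}
E(x):=\#\{p\le x:\ p^c\le P^+(p-1)<x^c\}.
\end{align*}
So it suffices to show $E(x)\ll x\log\log x/(\log x)^2$. I will split according to the size of $p$ at the threshold $x_1:=x/(\log x)^{A}$, with $A$ a constant to be fixed (I expect $A=2$, or $A=2/c$).

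\emph{Small primes.} The primes $p\le x_1$ contribute at most $\pi(x_1)\ll x/(\log x)^{A+1}$, which is acceptable once $A\ge1$. The interval is therefore chosen to be as large as this trivial bound allows.

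\emph{Large primes.} For $x_1<p\le x$ we have
\begin{align*}
p^c\ge x^c(\log x)^{-Ac}.
\end{align*}
Hence $q:=P^+(p-1)$ lies in the short multiplicative window $I:=[x^c(\log x)^{-Ac},\,x^c)$. Write $p-1=qm$ with $m\le x/q$. For each fixed prime $q\in I$, count the primes $p\le x$ with $p\equiv1\pmod q$ by the Brun--Titchmarsh inequality (or an upper-bound sieve via Lemma 2.2 applied to $p-1\equiv0\ (q)$). Since $q\le x^{c}\le x^{1/2}$, this gives
\begin{align*}
\ll \frac{x}{\varphi(q)\log(x/q)}\ll\frac{x}{q\log x}.
\end{align*}
Summing over primes $q\in I$ by Mertens' theorem gives
\begin{align*}
\sum_{q\in I}\frac1q=\log\frac{\log x^c}{\log x^c-Ac\log\log x}+O\Big(\frac1{\log x}\Big)\ll\frac{\log\log x}{\log x},
\end{align*}
so the large primes contribute $\ll x\log\log x/(\log x)^2$. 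Combining the two ranges completes the proof.

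The only point that needs care is the use of Brun--Titchmarsh for moduli $q$ up to $x^c$. For $c$ close to $1$ the modulus is nearly $x$, so the naive bound $x/(\varphi(q)\log(x/q))$ loses the factor $\log x$. For $c\ge 1/2$ I would swap roles: count $p\le x$ with $p-1=qm$ for fixed $m\le x^{1-c}(\log x)^{Ac}$, with $q$ prime. The Brun sieve for the pair $(q,\,qm+1)$ both prime gives
\begin{align*}
\ll \frac{x}{\varphi(m)(\log(x/m))^2}\ll\frac{x}{\varphi(m)(\log x)^{2}}
\end{align*}
per $m$. Summing over $m$ with $q=(p-1)/m$ restricted to $I$ again yields the extra $\log\log x/\log x$ factor via the window length. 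Since the theorem only needs $c\le1/2$, the first route suffices there, and the second covers all $0<c<1$.
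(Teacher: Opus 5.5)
Your proof is correct. There is no argument in the paper to compare it against: the paper simply quotes [15, Theorem 2]. Your route is as follows. Discard $p\le x/(\log x)^A$ trivially (any $A\ge 1$ suffices). For the remaining $p$, the prime $q=P^+(p-1)$ is trapped in $[x^c(\log x)^{-Ac},x^c)$. Bound $\pi(x;q,1)$ by Brun--Titchmarsh, then sum $1/q$ over that window by Mertens. The argument is self-contained and elementary, with an implied constant depending only on $c$. It also shows exactly where the factor $\log\log x/\log x$ comes from: it is the mass of $\sum 1/q$ over a window of multiplicative length $(\log x)^{Ac}$. The citation is shorter, but gives no such insight.

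Two corrections to your last paragraph. First, the restriction $q\le x^{1/2}$ is unnecessary. Since $q<x^c$, you always have $\log(x/q)\ge(1-c)\log x$, so Brun--Titchmarsh costs only a factor $O(1/(1-c))$. Your first route therefore already proves the lemma for every fixed $0<c<1$; there is no loss of $\log x$ near $c=1$.

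Second, the sieve alternative does not give the claim as you wrote it. The per-$m$ bound $x/(\varphi(m)(\log x)^2)$ ignores the constraint $q\in I$, and summed over $m\le x^{1-c}(\log x)^{Ac}$ it yields only $x/\log x$. To rescue it you would need the bound $\min(x^c,x/m)\,\frac{m}{\varphi(m)}(\log x)^{-2}$. Then the range $m\le x^{1-c}$ contributes $O(x/(\log x)^2)$, and only $x^{1-c}<m\le x^{1-c}(\log x)^{Ac}$ produces the $\log\log x$. Since the first route covers all $c$, you can simply drop that paragraph.
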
\begin{proof}
		This is [15, Theorem 2].
	\end{proof}
	We start from the following expression
	\begin{align*}
		\sum_{\substack{p\leq x\\P^+(p-1)\geq x^c}}1=\sum_{p\leq x}1-\sum_{\substack{p\leq x\\P^+(p-1)<x^c}}1=(1+o(1))\pi(x)-\mathcal{T}_c(x).\tag{3.1}
	\end{align*}
	We only need to estimate $\mathcal{T}_c(x)$.
	
	\subsection{Case 1: $0<c\leq1/2$}
	Fix $0<\delta<1/100$. Let $y=x^{c\varepsilon/(3J)}$, where $J=J(\delta)$ is a parameter satisfying
	\begin{align*}
		\frac{(\log(1/\delta))^{J+1}}{(J+1)!}\leq \delta.
	\end{align*}
	Since
	\begin{align*}
		\sum_{\ell}=\sum_{\substack{\ell\\(\ell,P(y^\delta,y))>1}}+\sum_{\substack{\ell\\(\ell,P(y^\delta,y))=1}},
	\end{align*}
	we write
	\begin{align*}
		\mathcal{T}_c(x)=\sum_{\substack{\ell\leq x\\P^+(\ell)<x^c\\\ell+1\ \text{is prime}}}1=\sum_{\substack{\ell\leq x\\P^+(\ell)<x^c\\(\ell,P(y^\delta,y))>1\\\ell+1\ \text{is prime}}}1+\sum_{\substack{\ell\leq x\\P^+(\ell)<x^c\\(\ell,P(y^\delta,y))=1\\\ell+1\ \text{is prime}}}1=\mathscr{S}_1+\mathscr{S}_2.\tag{3.2}
	\end{align*}
	\subsubsection{Estimate of $\mathscr{S}_1$}
	
	Note that for any real sequence $(a_\ell)$ with $a_{\ell}\geq 0$, we have
	\begin{align*}
		\sum_{\substack{(\ell,P(y^\delta,y))>1}}a_{\ell}&=\sum_{y^\delta<p_1\leq y}\sum_{\substack{(\ell,P(y^\delta,y))=1}}a_{p_1\ell}+\frac{1}{2!}\sum_{y^\delta<p_1,p_2\leq y}\sum_{\substack{(\ell,P(y^\delta,y))=1}}a_{p_1p_2\ell}\\&\quad+\frac{1}{3!}\sum_{y^\delta<p_1,p_2,p_3\leq y}\sum_{\substack{(\ell,P(y^\delta,y))=1}}a_{p_1p_2p_3\ell}+\cdots+O\Bigg(\sum_{y^{\delta}<p\leq y}\sum_{\ell}a_{p^2\ell}\Bigg)\\&\leq \sum_{1\leq j\leq J}\frac{1}{j!}\sum_{y^\delta<p_1,\cdots,p_j\leq y}\sum_{\substack{(\ell,P(y^\delta,y))=1}}a_{p_1\cdots p_j\ell}+\frac{1}{(J+1)!}\sum_{y^\delta<p_1,\cdots,p_{J+1}\leq y}\sum_{\substack{p_1\cdots p_{J+1}|\ell}}a_{\ell}\\&\quad+O\Bigg(\sum_{y^{\delta}<p\leq y}\sum_{\ell}a_{p^2\ell}\Bigg).
	\end{align*}
	Let $D=z^2=x^{5/8-\varepsilon}$ and $P(z)=\prod_{p\leq z}p$.
	Define
	\begin{align*}
		\mathcal{L}:=\{\ell\leq x:P^+(\ell)<x^c,(\ell,P(y^\delta,y))=(\ell,P^2(y^\delta,y))>1,2|\ell\}.
	\end{align*}
	By Lemma 2.2, there exists a sequence
	$\{\lambda_{d}^+\}_{d=1}^{\infty}$ of real numbers, vanishing for $d>D$ or $\mu(d)=0$, satisfying $\lambda_1^+=1$, $|\lambda_{d}^+|=O(1)$, $0\leq \mu*1\leq \lambda^+*1$. In particular, this sequence $\lambda^+$ can be expressed as a sum of well-factorable sequences.
	We have
	\begin{align*}
		\mathscr{S}_1&\leq  \sum_{\substack{ \ell\in\mathcal{L}\\(\ell+1,P(z))=1}}1+O(z)+O\Bigg(\sum_{y^{\delta}<p_1\leq y}\sum_{\substack{p\leq x\\p\equiv 1\mkern-15mu\pmod{p_1^2}}}1\Bigg)\\&\leq \sum_{\substack{ \ell\in\mathcal{L} }}\sum_{d|(\ell +1,P(z))}\lambda_d^++o(\pi(x))\\&\leq \sum_{1\leq j\leq J}\frac{1}{j!}\sum_{y^\delta<p_1,\cdots,p_j\leq y}\sum_{\substack{ \ell\leq x\\p_1\cdots p_j|\ell\\(\ell(p_1\cdots p_j)^{-1},P(y^\delta,y))=1\\P^+(\ell)<x^c\\2|\ell }}\sum_{d|(\ell +1,P(z))}\lambda_d^+\\&\quad +\frac{1}{(J+1)!}\sum_{y^\delta<p_1,\cdots,p_{J+1}\leq y}\sum_{\substack{ \ell\leq x\\p_1\cdots p_{J+1}|\ell\\P^+(\ell)<x^c\\2|\ell }}\sum_{d|(\ell +1,P(z))}\lambda_d^++o(\pi(x)).\tag{3.3}
	\end{align*}
	For any $1\leq j\leq J$, changing the order of summation, we write
	\begin{align*}
		&\sum_{y^\delta<p_1,\cdots,p_j\leq y}\sum_{\substack{ \ell\leq x\\p_1\cdots p_j|\ell\\(\ell(p_1\cdots p_j)^{-1},P(y^\delta,y))=1\\P^+(\ell)<x^c\\2|\ell }}\sum_{d|(\ell +1,P(z))}\lambda_d^+\\&=\sum_{d|P'(z)}\lambda_d^+\sum_{y^\delta<p_1,\cdots,p_j\leq y}\sum_{\substack{ \ell\leq x\\p_1\cdots p_j|\ell\\(\ell(p_1\cdots p_j)^{-1},P(y^\delta,y))=1\\P^+(\ell)<x^c\\2|\ell,(\ell,d)=1 }}\frac{1}{\varphi(d)}\\&\quad+\sum_{d|P'(z)}\lambda_d^+\sum_{y^\delta<p_1,\cdots,p_j\leq y}\sum_{\substack{ \ell\leq x\\p_1\cdots p_j|\ell\\(\ell(p_1\cdots p_j)^{-1},P(y^\delta,y))=1\\P^+(\ell)<x^c\\2|\ell }}\Bigg(1_{\ell \equiv -1\mkern-15mu\pmod{d}}-\frac{1_{(\ell,d)=1}}{\varphi(d)}\Bigg)\\&=M_j+R_j,
	\end{align*}
	where $P'(z)=\prod_{2<p\leq z}p$.
	Write $\ell=p_1\cdots p_j\ell_1$ with $(\ell_1,P(y^\delta,y))=1$ and $P^+(\ell_1)<x^c$.
	In order to prove $R_j\ll x(\log x)^A$, putting $p_1$ and $e=p_2\cdots p_j \ell_1$ in dyadic range, we only need to prove that for any $A>1$
	\begin{align*}
		\sum_{d}\lambda_d\sum_{\substack{p_1\in\mathscr{P}_1}}\sum_{e\in\mathscr{E}}b_e\Bigg(1_{p_1e\equiv a\mkern-15mu\pmod{d}}-\frac{1_{(p_1e,d)=1}}{\varphi(d)}\Bigg)\ll\frac{x}{(\log x)^A},\tag{3.4}
	\end{align*}
	where $\lambda_d$ is a well factorable function of level $D$, $|b_e|\leq 1$, $\mathscr{P}_1=[(1-\Delta)P_1,P_1]$, $\mathscr{E}=[(1-\Delta)E,E]$ with $P_1E\asymp x$, $y^\delta<P_1\leq y$, $\Delta=(\log x)^{-A_1}$. ($A_1$ is a suitable constant.)
	
	We need the following lemma due to Pascadi. (see [14, Proposition 4.4])
	\begin{lemma}
		Let $a\in\mathbb{Z}\setminus\{0\}$, $A,\varepsilon>0$, and $M,N,x,Q_1,Q_2,Q_3\gg1$ satisfy $MN\asymp x$, $N>x^\varepsilon$. 
		Let $(\alpha_n)$, $(\beta_m)$ be $1$-bounded complex sequences, such that $(\alpha_n)$ is supported on $P^-(n)\geq z_0:=x^{1/(\log\log x)^3}$ and satisfies the Siegel-Walfisz condition, which means
		\begin{align*}
			\Bigg|\sum_{\substack{n\sim N\\n\equiv b\mkern-15mu\pmod{q}\\(n,d)=1}}\alpha_n-\frac{1}{\varphi(q)}\sum_{\substack{n\sim N\\(n,dq)=1}}\alpha_n\Bigg|\ll_A\frac{N\tau(d)^{O(1)}}{(\log N)^A},
		\end{align*}
		for any $d\geq 1, q\geq 1$ and $(b,q)=1$, $A>1$.
		If 
		\begin{align*}
			Q_1&\leq Nx^{-\varepsilon} ,\\N^2Q_2Q_3^2&\leq x^{1-15\varepsilon},\\N^{2}Q_2^5Q_3^2&\leq x^{2-40\varepsilon},
		\end{align*} then for any $1$-bounded complex sequences $(\gamma_{q_1})$, $(\lambda_{q_2})$, $(\nu_{q_3})$ supported on $(q_i,a)=1$, one has\begin{align*}
			\sum_{q_1\sim Q_1}\gamma_{q_1}\sum_{q_2\sim Q_2}\lambda_{q_2}\sum_{q_3\sim Q_3}\nu_{q_3}\sum_{n\sim N}\alpha_n\sum_{m\sim M}\beta_m\Bigg(1_{mn\equiv a\mkern-15mu\pmod{q}}-\frac{1_{(mn,q)=1}}{\varphi(q)}\Bigg)\ll_{\varepsilon,A,a}\frac{x}{(\log x)^A}.
		\end{align*}
	\end{lemma}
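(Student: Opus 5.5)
Since this statement is taken verbatim from [14, Proposition 4.4], in the paper it suffices to invoke Pascadi's result. If I were to reprove it, I would use the Bombieri--Friedlander--Iwaniec dispersion method, adapted to triply factorable moduli $q=q_1q_2q_3$.

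\textbf{Reduction and dispersion.} First, using the well-factorable structure, I would absorb $q_1$, the smallest factor, together with the weights $\gamma_{q_1}$. The hypothesis $Q_1\leq Nx^{-\varepsilon}$ lets one handle the $q_1$-aspect through the Siegel--Walfisz property of $(\alpha_n)$, and the condition $P^-(n)\geq z_0$ controls the coprimality conditions $(n,q)=1$ up to negligible error. Then I would apply Cauchy--Schwarz with $m$ (and $q_1$, $q_2$) on the outside, to eliminate the arbitrary sequence $(\beta_m)$ by replacing it with a smooth weight. This leads to a dispersion sum
\begin{align*}
\sum_{q_1,q_2}\sum_{m}\Bigg|\sum_{q_3}\nu_{q_3}\sum_{n}\alpha_n\Bigg(1_{mn\equiv a\mkern-15mu\pmod{q}}-\frac{1_{(mn,q)=1}}{\varphi(q)}\Bigg)\Bigg|^2 .
\end{align*}
Expanding the square gives three terms. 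The two terms involving $1/\varphi(q)$ are evaluated asymptotically using Siegel--Walfisz for $\alpha_n$. The cross term with both congruences is the genuine dispersion term.

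\textbf{Off-diagonal terms.} In that term the $m$-sum runs over a single residue class modulo $[q_1q_2q_3,q_1q_2q_3']$. I would apply Poisson summation in $m$. The zero frequency reproduces the main term and cancels the other two pieces. The nonzero frequencies give incomplete Kloosterman-type phases $e\big(h\,a\overline{n_1q_3'}\,n_2\ldots/\cdots\big)$, with the relevant modulus built from $q_2$, $q_3$, $q_3'$ and the $n$-variables. The diagonal $n_1q_3'=n_2q_3$ contributes acceptably provided $N^2Q_2Q_3^2\leq x^{1-15\varepsilon}$, roughly.

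\textbf{Main obstacle.} The heart of the argument is bounding the resulting sums of Kloosterman sums with the smooth variable $h$ and the arbitrary weights in $n_1,n_2,q_3,q_3'$. I would first use the Deshouillers--Iwaniec bounds, which come from the Kuznetsov formula and the large sieve for Maass cusp form coefficients. Their loss from possible exceptional eigenvalues depends on $\theta$ (Kim--Sarnak, $\theta=7/64$). This is the step where the constraint $N^2Q_2^5Q_3^2\leq x^{2-40\varepsilon}$ should arise: it is the range where the spectral bound, after optimizing the lengths, beats the trivial bound by $x^{\varepsilon}$. Pascadi's refinement uses large sieve inequalities for exceptional eigenvalues that exploit averaging over the level $q_2$. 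I expect reproducing this spectral estimate, with the exceptional-eigenvalue savings, to be the genuinely hard part. All other steps (the Siegel--Walfisz main terms, the coprimality bookkeeping via $P^-(n)\geq z_0$, and the diagonal) are standard and give the saving $x(\log x)^{-A}$.
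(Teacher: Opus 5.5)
Your proposal agrees with the paper: the paper gives no argument of its own for Lemma 3.2 and simply quotes Pascadi's result [14, Proposition 4.4], which is exactly what you propose. The dispersion outline you add is not needed, and as written it would not work. With $q_1$, $q_2$ and $m$ all outside the Cauchy--Schwarz, the diagonal $q_3=q_3'$, $n=n'$ alone contributes roughly $x$ to the dispersion sum, while the admissible size is only about $xN/(Q_1Q_2)$, so you would need $Q_1Q_2\le Nx^{-\varepsilon}$; this fails in the paper's application, where $Q_2=x^{1/4-\varepsilon}$ and $N$ is a tiny power of $x$, and the hypothesis $Q_1\le Nx^{-\varepsilon}$ only permits $q_1$ to sit outside with $m$.
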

	
	Applying Lemma 3.2 with $Q_1=1$, $\varepsilon\leftarrow c\delta\varepsilon/(3J)$, $\alpha_n=1_{n\ \text{is prime}}1_{ n\in \mathscr{P}_1}$,
	we obtain that (3.4) holds by writing $\lambda=\lambda_2*\lambda_3$, where $\lambda_2(q_2)$ is supported on $q_2\leq x^{1/4-\varepsilon}$, $\lambda_3(q_3)$ is supported on $q_3\leq x^{3/8}$. We have controlled the remainder term.
	
	Now we begin to estimate the main term. We have
	\begin{align*}
		M_j&=\sum_{d|P'(z)}\lambda_d^+\sum_{y^\delta<p_1,\cdots,p_j\leq y}\sum_{\substack{ \ell\leq x\\p_1\cdots p_j|\ell\\(\ell(p_1\cdots p_j)^{-1},P(y^\delta,y))=1\\P^+(\ell)<x^c\\2|\ell,(\ell,d)=1 }}\frac{1}{\varphi(d)}\\&=\sum_{y^\delta<p_1,\cdots,p_j\leq y}\sum_{\substack{ \ell\leq x^{1-c}(p_1\cdots p_j)^{-1}\\(\ell,P(y^\delta,y))=1\\P^+(\ell)<x^c\\2|\ell }}\sum_{d|P''(z)}\frac{\lambda_d^+}{\varphi(d)},
	\end{align*}
	where $P''(z)=\prod_{2<p\leq z,(p,p_1\cdots p_j\ell)=1}p$. By Lemma 2.2, we have
	\begin{align*}
		\sum_{d|P''(z)}\frac{\lambda_d^+}{\varphi(d)}&\leq (F(2)+o(1))\prod_{\substack{2<p\leq z\\(p,p_1\cdots p_j\ell)=1}}\left(1-\frac{1}{p-1}\right)\\&=(e^\gamma+o(1))\prod_{\substack{2<p\leq z}}\left(\frac{p-2}{p-1}\right)H(p_1\cdots p_j\ell),
	\end{align*}
	where
	\begin{align*}
		H(n)=\prod_{p>2,p|n}\left(\frac{p-1}{p-2}\right).
	\end{align*}
	Mertens' theorem implies
	\begin{align*}
		\prod_{2<p\leq z}\left(\frac{p-2}{p-1}\right)=\frac{2e^{-\gamma}A_0+o(1)}{\log z},
	\end{align*}
	where
	\begin{align*}
		A_0=\prod_{p>2}\left(1-\frac{1}{(p-1)^2}\right).
	\end{align*}
	Then we have
	\begin{align*}
		M_j\leq \frac{4A_0+o(1)}{\log D}\sum_{y^\delta<p_1,\cdots,p_j\leq y}\sum_{\substack{ \ell\leq x(2p_1\cdots p_j)^{-1}\\(\ell,P(y^\delta,y))=1\\P^+(\ell)<x^c}}H(p_1\cdots p_j\ell),\tag{3.5}
	\end{align*}
	Summing over all $1\leq j\leq J$, by partial summation and Lemmas 2.1, 2.3, we have
	\begin{align*}
		\sum_{1\leq j\leq J}\frac{M_j}{j!}&\leq \frac{4A_0+o(1)}{\log D}\sum_{1\leq j\leq J}\frac{1}{j!}\sum_{y^\delta<p_1,\cdots,p_j\leq y}\sum_{\substack{ \ell\leq x/2\\p_1\cdots p_j|\ell\\(\ell(p_1\cdots p_j)^{-1},P(y^\delta,y))=1\\P^+(\ell)<x^c}}H(\ell)\\&\leq \frac{4A_0+o(1)}{\log D}\sum_{\substack{ \ell\leq x/2\\P^+(\ell)<x^c}}H(\ell)\\&=\left(\frac{2}{5/8-\varepsilon}\rho\left(\frac{1}{c}\right)+o(1)\right)\pi(x).\tag{3.6}
	\end{align*}
	Similar to (3.4) and (3.5), for a suitable absolute constant $c_1$, we have
	\begin{align*}
		&\sum_{y^\delta<p_1,\cdots,p_{J+1}\leq y}\sum_{\substack{ \ell\leq x\\p_1\cdots p_{J+1}|\ell\\2|\ell }}\sum_{d|(\ell +1,P(z))}\lambda_d^+\\&\leq(2A_0+o(1))\frac{1}{\log D} \sum_{y^\delta<p_1,\cdots,p_{J+1}\leq y}\sum_{\substack{ \ell\leq x(2p_1\cdots p_{J+1})^{-1} }}H(\ell)\\&\leq  \left(c_1+o(1)\right)\frac{x}{\log x} \sum_{y^\delta<p_1,\cdots,p_{J+1}\leq y}\frac{1}{p_1\cdots p_{J+1}}\\&= (c_1+o(1))\frac{x}{\log x} \left(\log\frac{1}{\delta}\right)^{J+1}.
	\end{align*}
	
	To summarize all the above, we obtain
	\begin{align*}
		\mathscr{S}_1\leq \left(\frac{2}{5/8-\varepsilon}\rho\left(\frac{1}{c}\right)+c_1\delta+o(1)\right)\pi(x).\tag{3.7}
	\end{align*}

	\subsubsection{Estimate of $\mathscr{S}_2$} We will show that for sufficiently small $\delta$, $\mathscr{S}_2$ is small.
	Now we choose $D=x^{1/2}/(\log x)^{B}$.
	We have
	\begin{align*}
		\mathscr{S}_2&\leq \sum_{\substack{ p\leq x\\(p-1,P(y^\delta,y))=1}}1\leq \sum_{\substack{ p\leq x }}\sum_{d|(p-1,P(y^\delta,y))}\lambda_d^+= \sum_{d|P(y^\delta,y)}\lambda_d^+\sum_{\substack{ p\leq x\\p\equiv 1\mkern-15mu\pmod{d}}}1\\&=\pi(x)\sum_{d|P(y^\delta,y)}\frac{\lambda_d^+}{\varphi(d)}+\sum_{d|P(y^\delta,y)}\lambda_d^+\Bigg(\sum_{\substack{p\leq x\\p\equiv 1\mkern-15mu\pmod{d}}}1-\frac{\pi(x)}{\varphi(d)}\Bigg).
	\end{align*}
	By the Bombieri-Vinogradov theorem, we can bound the second summation by
	\begin{align*}
		\ll \frac{x}{(\log x)^A},
	\end{align*}
	which is negligible.
	For the main term, for a suitable absolute constant $c_2$, we have\begin{align*}
		\sum_{d|P(y^\delta,y)}\frac{\lambda_d^+}{\varphi(d)}\leq (F(2)+o(1))\prod_{\substack{y^\delta<p\leq y}}\left(\frac{p-2}{p-1}\right)\leq c_2\delta+o(1).
	\end{align*}
	Then we have
	\begin{align*}
		\mathscr{S}_2\leq (c_2\delta+o(1))\pi(x).\tag{3.8}
	\end{align*}
	
	Conclude from (3.1), (3.2), (3.7) and (3.8) that
	\begin{align*}
		\mathop{\lim\sup}_{x\rightarrow \infty}\frac{\mathcal{T}_c(x)}{\pi(x)}\leq \frac{2}{5/8-\varepsilon}\rho\left(\frac{1}{c}\right)+c_1\delta+c_2\delta.
	\end{align*}
	As $\max(\varepsilon,\delta)\rightarrow 0^+$, we complete the proof of this case.

	\subsection{Case 2: $11/32<c\leq1/2$}
	We have
	\begin{align*}
		\mathcal{T}_c(x)&=\sum_{\substack{p\leq x\\P^+(p-1)<x^c}}1=\sum_{\substack{p\leq x\\P^+(p-1)<x^c}}\frac{\log(p-1)}{\log x}+o(\pi(x))\\&=\sum_{\substack{p\leq x}}\frac{\log(h(p-1))}{\log x}-\sum_{\substack{p\leq x\\P^+(p-1)\geq x^c}}\frac{\log(h(p-1))}{\log x}+o(\pi(x))\\&:=\mathscr{T}_1(x)-\mathscr{T}_2(x)+o(\pi(x)),\tag{3.9}
	\end{align*}
	where $h(n):=\prod_{p^k\|n,p<x^c}p^k$.
	After invoking the identity
	\begin{align*}
		\log n=\sum_{m|n}\Lambda(m), 
	\end{align*}
	we have
	\begin{align*}
		\mathscr{T}_1(x)&= \frac{1}{\log x}\sum_{p'< x^c}\log p'\sum_{\substack{p\leq x\\p\equiv 1\mkern-15mu\pmod{p'}}}1+O\Bigg(\frac{1}{\log x}\sum_{k\geq 2}\sum_{p'^k\leq x}\log p'\sum_{\substack{p\leq x\\p\equiv 1\mkern-15mu\pmod{p'^k}}}1\Bigg)+o(\pi(x)).
	\end{align*}
	For the error term, we obtain the bound \(o(\pi(x))\). (To do this, split the condition \(p'^k \leq x\) into two ranges: \(p'^k \leq x^{2/3}\) and \(x^{2/3} < p'^k < x\), then use the Brun-Titchmarsh inequality for the first range, and trivial estimates for the second range.) For the main term, we use the Bombieri-Vinogradov theorem to get
	\begin{align*}
		\mathscr{T}_1(x)&=\frac{1}{\log x}\sum_{p'< x^c}\log p'\sum_{\substack{p\leq x\\p\equiv 1\mkern-15mu\pmod{p'}}}1+o(\pi(x))\\&= \frac{\pi(x)}{\log x}\sum_{p'< x^c}\frac{\log p'}{\varphi(p')}+o(\pi(x))\\&=\left(c+o(1)\right)\pi(x).\tag{3.10}
	\end{align*}
	
	For $\mathscr{T}_2(x)$, we have
	\begin{align*}
		\mathscr{T}_2(x)\geq &\frac{1}{\log x}\sum_{p\leq x}\Bigg(\sum_{\substack{x^{s}<p_1<x^{1/2}\\p_1|(p-1)}}\log(x/p_1)-\sum_{\substack{x^{s}<p_1<p_2<x^{1/2}\\p_1p_2|(p-1)}}\log x\\&-\sum_{\substack{x^{s}<p_1<x^{1/2}\\x^{c}<p_2<x^s\ \text{or}\ x^{1/2}<p_2<x/p_1\\p_1p_2|(p-1)}}\log p_2\Bigg)+o(\pi(x))\\= &\frac{1}{\log x}\sum_{x^s<p_1<x^{1/2}}\log(x/p_1)\sum_{\substack{p\leq x\\p\equiv 1\mkern-15mu\pmod{p_1}}}1\\&-\sum_{\substack{\ell,p_1,p_2\\\ell p_1p_2\leq x\\x^s<p_1<p_2<x^{1/2}\\\ell p_1p_2+1\ \text{is prime}}}1-\frac{1}{\log x}\sum_{\substack{\ell,p_1,p_2\\\ell p_1p_2\leq x\\x^s<p_1<x^{1/2}\\x^{c}<p_2<x^s\ \text{or}\ x^{1/2}<p_2<x/p_1\\\ell p_1p_2+1\ \text{is prime}}}\log p_2+o(\pi(x)),
	\end{align*}
	where $11/32<c<s<1/2$.
	Similar to the proof of [17, Theorem 1.1], we have
	\begin{align*}
		\mathscr{T}_2(x)\geq (g(s,c)+o(1))\pi(x),
	\end{align*}
	where
	\begin{align*}
		g(s,c):&=\log\frac{1}{2s}-\left(\frac{1}{2}-s\right)-\frac{8}{5}\left(\log\frac{1}{2s}\right)^2-\frac{16}{5}\left(s-c+\frac{1}{2}\right)\log\frac{1}{2s}+\frac{16}{5}\left(\frac{1}{2}-s\right)\\&=\frac{11}{5}\left(\frac{1}{2}-s\right)-\left(\frac{16}{5}\left(s-c+\frac{1}{2}\right)-1\right)\log\frac{1}{2s}-\frac{8}{5}\left(\log\frac{1}{2s}\right)^2.
	\end{align*}
	Then we have $g(1/2,c)=0$ and 
	\begin{align*}
		\frac{\partial}{\partial s}g(s,c)\Bigg|_{s=1/2}=\frac{11}{5}-\frac{32}{5}c<0.
	\end{align*}
	Thus for $11/32<c<1/2$, there exists $s_0\in(c,1/2)$ satisfying $g(s_0,c)>0$, i.e., there exists $j(c)>0$ such that
	\begin{align*}
		\mathscr{T}_2(x)\geq (j(c)+o(1))\pi(x).\tag{3.11}
	\end{align*}
	
	In particular, for $c=1/2$, we have
	\begin{align*}
		\mathscr{T}_2(x)=\frac{1}{\log x}\sum_{x^{1/2}\leq p'<x}\log(x/p')\sum_{\substack{p\leq x\\p\equiv 1\mkern-15mu\pmod{p'}}}1.
	\end{align*}
	By the following fact that (see [9, Theorem 1])
	\begin{align*}
		\frac{1}{\log x}\sum_{x^{1/2}\leq p'<x}\log p'\sum_{\substack{p\leq x\\p\equiv 1\mkern-15mu\pmod{p'}}}1=\left(\frac{1}{2}+o(1)\right)\pi(x),
	\end{align*}
	we have
	\begin{align*}
		\mathscr{T}_2(x)\geq& \frac{1}{\log x}\left(\frac{1}{s}-1\right)\sum_{x^{1/2}\leq p'<x}\log p'\sum_{\substack{p\leq x\\p\equiv 1\mkern-15mu\pmod{p'}}}1\\&-\frac{1}{\log x}\sum_{x^{s}\leq p'<x}\left(\frac{1}{s}\log p'-\log x\right)\sum_{\substack{p\leq x\\p\equiv 1\mkern-15mu\pmod{p'}}}1\\=&\frac{1}{2}\left(\frac{1}{s}-1\right)\pi(x)+o(\pi(x))-\frac{1}{\log x}\sum_{x^{s}\leq p'<x}\left(\frac{1}{s}\log p'-\log x\right)\sum_{\substack{p\leq x\\p\equiv 1\mkern-15mu\pmod{p'}}}1,
	\end{align*}
	where $1/2<s<1$.
	Similar to the proof of [17, Theorem 1.1], we also have
	\begin{align*}
		\frac{1}{\log x}\sum_{x^{s}\leq p'<x}\left(\frac{1}{s}\log p'-\log x\right)\sum_{\substack{p\leq x\\p\equiv 1\mkern-15mu\pmod{p'}}}1&=\frac{1}{\log x}\sum_{\substack{\ell,p'\\\ell p'\leq x\\x^s\leq p'<x\\\ell p'+1\ \text{is prime}}}\left(\frac{1}{s}\log p'-\log x\right)\\&\leq \Bigg(\frac{16}{5s}\left(1-s\right)-\frac{16}{5}\log\frac{1}{s}+o(1)\Bigg)\pi(x).
	\end{align*}
	Then by numerical calculation, we have
	\begin{align*}
		\mathscr{T}_2(x)\geq \Bigg(\frac{1}{2}\left(\frac{1}{s}-1\right)-\frac{16}{5s}\left(1-s\right)+\frac{16}{5}\log\frac{1}{s}+o(1)\Bigg)\pi(x)\geq (0.0436+o(1))\pi(x),\tag{3.12}
	\end{align*}
	by taking $s=0.843$.
	Concluding from (3.1), (3.9)--(3.12), we complete the proof of the second case.
	
	\section{Remark}
	For $1/2<\eta<17/32$, we have $\pi(x;q,1)\geq (C(\eta)+o(1))\pi(x)/\varphi(q)$ for almost primes $q\sim x^\eta$, where $C(\eta)>0$. (see [15, Lemma 2.2]) This result can help us to get a better lower bound for $\mathscr{T}_2(x)$. However, the computation of the function $C(\cdot)$ is somewhat cumbersome, so we omit it.

	We also have 
	\begin{align*}
		\mathscr{T}_2(x)\geq\frac{1}{\log x}\sum_{x^{1-c}<p'<x}\log(x/p')\sum_{\substack{p\leq x\\p\equiv 1\mkern-15mu\pmod{p'}}}1.
	\end{align*}
	By the following fact that
	\begin{align*}
		\frac{1}{\log x}\sum_{x^{1-c}<p'<x}\log p'\sum_{\substack{p\leq x\\p\equiv 1\mkern-15mu\pmod{p'}}}1\geq (f(c)+o(1))\pi(x),
	\end{align*}
	where $f(c)>0$ for $1/2\geq c\geq1-0.677=0.323$ (see [1, Theorem 2 and (7.1)]), we have
	\begin{align*}
		\mathscr{T}_2(x)\geq \left(g_1(s)+o(1)\right)\pi(x),
	\end{align*}
	where 
	\begin{align*}
		g_1(s):=f(0.323)\left(\frac{1}{s}-1\right)-\frac{16}{5s}\left(1-s\right)+\frac{16}{5}\log\frac{1}{s}.
	\end{align*}
	The function $g_1(s)$ satisfies $g_1(1)=0$ and $g_1'(1)=-f(0.323)<0$. Thus for $0.323\leq c\leq1/2$, there exists $s_0\in(1-c,1)$ satisfying $g_1(s_0)>0$. Unfortunately, the computation of the function $f(\cdot)$ is rather complicated; we only establish existence.
	
	\section*{Acknowledegements}

\end{document}